\DeclareFontFamily{OT2}{cmr}{\hyphenchar\font45 }
\DeclareFontShape{OT2}{cmr}{m}{n}{<->wncyr10}{}
\DeclareFontShape{OT2}{cmr}{m}{it}{<->wncyi10}{}
\DeclareFontShape{OT2}{cmr}{m}{sc}{<->wncysc10}{}
\DeclareFontShape{OT2}{cmr}{b}{n}{<->wncyb10}{}
\DeclareFontShape{OT2}{cmr}{bx}{n}{<->ssub*wncyr/b/n}{}
\DeclareFontFamily{OT2}{cmss}{\hyphenchar\font45 }
\DeclareFontShape{OT2}{cmss}{m}{n}{<->wncyss10}{}
\DeclareRobustCommand\cyr{\fontencoding{OT2}\selectfont}
\DeclareTextFontCommand{\textcyr}{\cyr}
\newtheorem{theorem}{Theorem}[section]
\newtheorem{prop}[theorem]{Proposition}
\newtheorem{corr}[theorem]{Corollary}
\theoremstyle{definition}
\newtheorem{deff}[theorem]{Definition}
\theoremstyle{remark}
\newtheorem{comm}[theorem]{Remark}
\newcommand{\R}{\mathbb R}
\newcommand{\p}{\partial}
\newcommand{\z}{\bar z}
\newcommand{\dbar}{\bar\partial}
\DeclareMathOperator{\im}{Im}
\DeclareMathOperator{\re}{Re}
\newcommand{\bc}{\mathbb{B}}
\title{Bicomplex Schwarz and Dirichlet Boundary Value Problems}
\author{William L. Blair}
\address{Department of Mathematics\\
  The University of Texas at Tyler\\
  Tyler, TX 75799}
\email{wblair@uttyler.edu}
\keywords{Schwarz boundary value problem,  bicomplex numbers, Dirichlet boundary value problem, boundary value in the sense of distributions, nonhomogeneous Cauchy-Riemann equation}
\subjclass[2010]{30E25, 35C15, 30G30, 46F20, 30J99}
\begin{document}

\begin{abstract}
    We define and solve boundary value problems of Schwarz and Dirichlet type on the complex unit disk for bicomplex-valued functions.
\end{abstract}

\maketitle

\section{Introduction}

In this paper, we construct bicomplex versions of the Schwarz and Dirichlet boundary value problems on the complex unit disk and show they are uniquely solvable by formulas similar to those used for the associated complex boundary value problems. 

The Dirichlet boundary value problem is a classic boundary value problem of mathematical analysis. The problem seeks a harmonic function on a domain that agrees with a prescribed function on the boundary. On the complex unit disk, the Dirichlet problem is known to be uniquely solvable for real-valued functions on the boundary so long as the function is Lebesgue integrable on the unit circle by integrating the boundary value against the Poisson kernel for the disk. In \cite{Straube}, E. Straube showed that if the boundary value function is replaced with a distribution, then the corresponding Dirichlet problem is uniquely solvable by now considering the function that is realized by pairing the boundary distribution against the Poisson kernel. 

The Schwarz boundary value problem is a complex generalization of the Dirichlet problem where now we seek a holomorphic function in the interior of the disk that has real part that agrees with a prescribed boundary function. This is immediately solvable by using the solution to the Dirichlet problem where the boundary condition is with respect to the boundary value for the real part of the Schwarz problem solution. This is a harmonic function, and since the disk is simply connected, it follows that the harmonic conjugate exists. Hence, one can construct a holomorphic function with real part that agrees with the desired boundary value. However, this is not well defined, as harmonic conjugates are only unique up to arbitrary constants. In this case, the problem is made well defined by observing that the harmonic conjugate of the constructed Poisson integral is zero at the origin. Consequently, if one prescribes the value of the imaginary part of the solution to the Schwarz problem at the origin, then a harmonic conjugate is constructable with that specific constant and this problem is uniquely solvable. See \cite{Beg}. As in the case of the Dirichlet problem, this construction is immediately generalizable to distributional boundary values by pairing the boundary distribution against the holomorphic completion of the Poisson kernel. This solution for the generalized Schwarz boundary value problem was shown to be unique in \cite{WBD}. Also in \cite{WBD}, the author and B. Delgado showed that a Schwarz boundary value problem that seeks a solution to a nonhomogeneous Cauchy-Riemann equation and has boundary condition with respect to a distribution is uniquely solvable by combining the described construction for solving the holomorphic Schwarz boundary value problem and the known solution formula for the nonhomogeneous Schwarz problem with continuous boundary condition found in \cite{Beg}.

The bicomplex numbers are a higher-dimensional generalization of the complex numbers that is different from $\mathbb{C}^2$, as studied in the analysis of functions of several complex variables, and the quaternions, as studied in the analysis of functions of a quaternionic variable. Functions of a single complex variable that take values in the bicomplex numbers are known to be useful in studying the complex stationary Schr\"odinger equation, see \cite{KravAPFT}, and were studied previously in, for example, \cite{BCAtomic, BCHoiv, BCHarmVek, BCBergman, BCTransmutation}. Since solutions of the homogeneous bicomplex Cauchy-Riemann type equation, where the differential operator is the natural bicomplexification of the complex Cauchy-Riemann operator studied in \cite{BCAtomic, BCHoiv, BCHarmVek, BCBergman, BCTransmutation}, are representable as a linear combination of a complex holomorphic and a complex anti-holomorphic function, it follows that there is a natural path to considering boundary value problems traditionally associated with complex Cauchy-Riemann type equations in the setting of bicomplex numbers. Specifically, we define Schwarz and Dirichlet boundary value problems for bicomplex-valued function of a single complex variable. We take advantage of the representation of these functions as a linear combination of members of more familiar classes of functions and use the known solution formulas for the corresponding complex boundary value problems to build bicomplex-valued functions that solve these problems. This is the first time these types of boundary value problems have been considered in the bicomplex setting.

We outline the paper. In Section \ref{BackgroundSection}, we provide definitions and background results used in the sections that follow. In Section \ref{bcschwarz}, we define a bicomplex Schwarz boundary value problem and show that it is uniquely solvable by a formula that is the natural bicomplex analogue of the solution to the complex Schwarz boundary value problem. In Section \ref{bcdirichlet}, we define a bicomplex Dirichlet boundary value problem and show the problem is uniquely solvable by a natural extension of the solution formula of the complex Dirichlet boundary value problem.

\section{Background}\label{BackgroundSection}

We use $D$ for the set of complex numbers with modulus less than one and $\p D$ for its boundary. For $p$ a positive real number, we denote by $L^p(D,\mathbb{C})$ the set of complex-valued functions defined on $D$ with integrable modulus raised to the $p^{\text{th}}$ power. We use $C(\p D, \mathbb{R})$ to indicate the set of continuous real-valued functions defined on $\p D$, $C^\infty(\p D)$ to indicate the set of infinitely differentiable functions on $\p D$,  $\mathcal{D}'(\p D)$ is the collection of distributions on $\p D$, and $Hol(D)$ is the set of complex-valued holomorphic functions on $D$.

Now, we provide some definitions of objects that will be used throughout. 

\begin{deff}
    Let $f: D \to \mathbb{C}$. We say that $f$ has a boundary value in the sense of distributions $f_b$, also called a distributional boundary value, if, for every $\gamma \in C^\infty(\p D)$, the limit
    \[
        \lim_{r \nearrow 1} \int_0^{2\pi} f(re^{i\theta}) \gamma(\theta) \,d\theta 
    \]
    exists. 
\end{deff}

The classic Schwarz boundary value problem seeks a holomorphic function in the disk with a prescribed real part on the circle. This is the simplest form of the classic Riemann-Hilbert problem and is a holomorphic extension of the Dirichlet problem for harmonic functions. This problem is not well-defined without the inclusion of requiring the imaginary part of the function to take a prescribed value at the origin. Many variations of this problem have been studied that include the function solving generalizations of the Cauchy-Riemann equations, the boundary conditions being in different classes, and the consideration of other domains. See \cite{BegBook, Beg, higherupper, WB, WB2, WB3, WBD, SchwarzRing, Gaertner, polyhardy, metahardy, bvpuhp, CampMez} and many others. For example, the next theorem shows that the Schwarz problem for the nonhomogensous Cauchy-Riemann equation with continuous boundary condition is solvable. 

\begin{theorem}[Theorem 2.1 \cite{Beg}]\label{Begfirstorder}
The Schwarz boundary value problem 
\[
    \begin{cases}
        \frac{\p w}{\p\z} = f, & \text{ in } D\\
        \re\{w\} = \gamma, & \text{ on } \p D\\
        \im\{w(0)\} = c,
    \end{cases}
\]
for $f \in L^1(D, \mathbb{C})$, $\gamma \in C(\p D, \mathbb{R})$, and $c \in \mathbb{R}$, is uniquely solved by 
\[
w(z) = ic + \frac{1}{2\pi i} \int_{|\zeta| = 1} \gamma(\zeta) \frac{\zeta + z}{\zeta - z} \frac{d\zeta}{\zeta} - \frac{1}{2 \pi} \iint_D \left(\frac{f(\zeta)}{\zeta} \frac{\zeta+z}{\zeta - z} + \frac{\overline{f(\zeta)}}{\overline{\zeta}} \frac{1+z\overline{\zeta}}{1-z\overline{\zeta}} \right)\, d\xi\,d\eta.
\]
\end{theorem}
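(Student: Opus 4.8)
The plan is to verify directly that the displayed function solves each of the three conditions, and then to prove uniqueness by a maximum-principle argument. Write $w = ic + S[\gamma] + A[f]$, where $S[\gamma](z) = \frac{1}{2\pi i}\int_{|\zeta|=1}\gamma(\zeta)\frac{\zeta+z}{\zeta-z}\frac{d\zeta}{\zeta}$ is the Schwarz integral and $A[f]$ denotes the double integral. For the equation $\frac{\p w}{\p\z} = f$: the function $S[\gamma]$ is holomorphic in $D$, and the kernel $\frac{1+z\overline{\zeta}}{1-z\overline{\zeta}}$ is holomorphic in $z$ throughout $D$, so only the first summand of $A[f]$ contributes to $\p w/\p\z$. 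Applying the partial-fraction identity $\frac{\zeta+z}{\zeta(\zeta-z)} = \frac{2}{\zeta-z} - \frac{1}{\zeta}$, that summand equals $-\frac{1}{\pi}\iint_D\frac{f(\zeta)}{\zeta-z}\,d\xi\,d\eta$ plus a term that is constant in $z$; the former is the Cauchy--Pompeiu operator applied to $f$, whose $\dbar$-derivative in $z$ is $f$ on $D$. Hence $\p w/\p\z = f$.

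For the boundary condition $\re\{w\} = \gamma$ on $\p D$: one has $\re\{ic\} = 0$; the real part of $S[\gamma]$ is, after the substitution $\zeta = e^{it}$, the Poisson integral of $\gamma$, which since $\gamma \in C(\p D, \R)$ has boundary value $\gamma$; and $\re\{A[f]\}$ vanishes on $\p D$. This last point is the main algebraic computation: forming $A[f] + \overline{A[f]}$ and using $\overline{\frac{f(\zeta)}{\zeta}\frac{\zeta+z}{\zeta-z}} = \frac{\overline{f(\zeta)}}{\overline{\zeta}}\frac{\overline{\zeta}+\overline{z}}{\overline{\zeta}-\overline{z}}$, the integrand of $2\re\{A[f]\}$ becomes a combination of the two brackets $\frac{\zeta+z}{\zeta-z} + \frac{1+\overline{z}\zeta}{1-\overline{z}\zeta}$ and $\frac{1+z\overline{\zeta}}{1-z\overline{\zeta}} + \frac{\overline{\zeta}+\overline{z}}{\overline{\zeta}-\overline{z}}$, each of which is identically zero when $|z| = 1$ because $\overline{z} = 1/z$ there. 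Thus $\re\{w\}$ has boundary value $\gamma$.

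For the normalization, set $z = 0$: then $S[\gamma](0) = \frac{1}{2\pi}\int_0^{2\pi}\gamma(e^{it})\,dt$ and $A[f](0) = -\frac{1}{2\pi}\iint_D\left(\frac{f(\zeta)}{\zeta} + \frac{\overline{f(\zeta)}}{\overline{\zeta}}\right)d\xi\,d\eta$ are both real, so $\im\{w(0)\} = \im\{ic\} = c$. For uniqueness, if $w_1$ and $w_2$ are two solutions then $w := w_1 - w_2$ is holomorphic in $D$ with $\re\{w\} = 0$ on $\p D$ and $\im\{w(0)\} = 0$; since $\re\{w\}$ is harmonic with zero boundary values it vanishes identically, a holomorphic function with constant real part is constant, and the origin condition forces that constant to be $0$. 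Hence $w_1 = w_2$.

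The step I expect to be the main obstacle is the function-space bookkeeping behind the boundary condition: for $f$ merely in $L^1(D,\C)$ the Cauchy--Pompeiu operator need not extend continuously to $\p D$, so the vanishing of $\re\{A[f]\}$ on $\p D$ — and, for uniqueness, the vanishing of $\re\{w_1-w_2\}$ — must be read in the appropriate (e.g. $L^1$-trace) sense, with a matching form of the maximum principle; the pointwise identities above are the easy part.
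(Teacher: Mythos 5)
This statement is imported by the paper as Theorem 2.1 of \cite{Beg} and is not proved in the paper itself; your direct verification --- holomorphy of the Schwarz integral and of the kernel $\frac{1+z\overline{\zeta}}{1-z\overline{\zeta}}$, the partial-fraction reduction of the remaining term to the Cauchy--Pompeiu operator, the vanishing of the symmetrized area kernel on $|z|=1$ via $\overline{z}=1/z$, realness of both integrals at the origin, and uniqueness through the homogeneous holomorphic Schwarz problem --- is exactly the standard argument given in the cited source. It is correct, subject only to the regularity caveat you already flag, namely that for $f$ merely in $L^1(D,\C)$ the boundary value of the area integral (and hence the boundary condition for the difference of two solutions) must be interpreted in an appropriate limiting sense rather than pointwise.
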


\begin{comm} Note that
\[
\frac{1}{2\pi i} \int_{|\zeta| = 1} \gamma(\zeta) \frac{\zeta + z}{\zeta - z} \frac{d\zeta}{\zeta} = \frac{1}{2\pi}\int_0^{2\pi} \gamma(e^{it}) \left( P_r(\theta - t) + iQ_r(\theta - t)\right)\,dt,
\]
where $z = re^{i\theta}$, and
\[
P_r(\theta) = \frac{1-r^2}{1-2r\cos(\theta) + r^2},
\]
and 
\[
Q_r(\theta) =  \frac{2r\sin(\theta)}{1-2r\cos(\theta) + r^2}
\]
are the Poisson kernel and the conjugate Poisson kernel on $D$, respectively. 
\end{comm}

In \cite{WBD}, the author with B. Delgado used the characterization of pairing a distribution on the circle with the Poisson kernel of the disk for harmonic functions with distributional boundary values, as found in \cite{Straube}, to show the Schwarz boundary value problem, with the boundary condition of a distributional boundary value, is solved similarly by pairing the distributional boundary value against the holomorphic extension of the Poisson kernel. This result is the next theorem.

\begin{theorem}[Theorem 6.9 \cite{WBD}]\label{nonhomogsbvp}
The Schwarz boundary value problem
\[\begin{cases}
\frac{\p w}{\p\z} = f, &\text{ in } D,\\
\re\{w_b\} = g,\\
\im\{w(0)\} = c,
\end{cases}\]
for $f \in L^1(D)$, $g\in \mathcal{D}'(\p D)$, and $c\in \mathbb{R}$, is solved by 
\[
w = \frac{1}{2\pi}\langle g, P_r(\theta - \cdot) + iQ_r(\theta - \cdot)\rangle + ic  - \frac{1}{2\pi}\iint_{|\zeta|<1} \left( \frac{f(\zeta)}{\zeta}\,\frac{\zeta + z}{\zeta - z} + \frac{\overline{f(\zeta)}}{\overline{\zeta}}\,\frac{1+z\overline{\zeta}}{1-z\overline{\zeta}}\right)\,d\xi\,d\eta,
\]
and the solution is unique. 
\end{theorem}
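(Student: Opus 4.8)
The plan is to split the proposed $w$ as $w = w_1 + w_2$, where
\[
w_1(z) = \frac{1}{2\pi}\la g, P_r(\theta - \cdot) + iQ_r(\theta - \cdot)\ra + ic
\]
carries the distributional boundary datum and
\[
w_2(z) = -\frac{1}{2\pi}\iint_{|\zeta|<1}\left(\frac{f(\zeta)}{\zeta}\,\frac{\zeta+z}{\zeta-z} + \frac{\overline{f(\zeta)}}{\overline\zeta}\,\frac{1+z\overline\zeta}{1-z\overline\zeta}\right)d\xi\,d\eta
\]
is the Begehr particular solution of the inhomogeneous equation; I would verify that each summand solves a boundary value problem of its own and then recombine by linearity, reducing uniqueness to the uniqueness of the distributional Dirichlet problem of \cite{Straube}.

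First I would treat $w_1$. Writing $z = re^{i\theta}$ and $\zeta = e^{it}$ one has $\frac{\zeta+z}{\zeta-z} = P_r(\theta - t) + iQ_r(\theta - t)$, and for each fixed $z \in D$ this is a $C^\infty$ function of $t$ that depends holomorphically on $z$; estimating the $z$-difference quotients in the topology of $C^\infty(\p D)$ shows that applying $\la g, \cdot\ra$ produces a holomorphic function of $z$, so $\p w_1/\p\z = 0$. Its real part is $\frac{1}{2\pi}\la g, P_r(\theta - \cdot)\ra$, which by the result of Straube \cite{Straube} is exactly the harmonic function on $D$ with distributional boundary value $g$, so $\re\{(w_1)_b\} = g$; and since $Q_0 \equiv 0$ it follows that $\im\{w_1(0)\} = c$. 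This is precisely the homogeneous Schwarz problem already handled in \cite{WBD}.

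Next, $w_2$ is the solution furnished by Theorem \ref{Begfirstorder} with $\gamma \equiv 0$ and $c = 0$, so $\p w_2/\p\z = f$ in $D$, the real part of $w_2$ extends continuously to $\p D$ with boundary value $0$ — hence has distributional boundary value $0$ — and $\im\{w_2(0)\} = 0$. Linearity of $\p/\p\z$ and of the limit defining the distributional boundary value then yields $\p w/\p\z = f$, $\re\{w_b\} = g + 0 = g$, and $\im\{w(0)\} = c + 0 = c$, so $w$ solves the stated problem. For uniqueness, if $w$ and $\tilde w$ are two solutions then $u = w - \tilde w$ is holomorphic on $D$ with $\re\{u_b\} = 0$ and $\im\{u(0)\} = 0$; its real part is harmonic with vanishing distributional boundary value, so the uniqueness half of Straube's theorem forces $\re\{u\} \equiv 0$, a holomorphic function with zero real part is a purely imaginary constant, and $\im\{u(0)\} = 0$ then gives $u \equiv 0$.

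The step I expect to be the real obstacle is the analysis of $w_1$: making rigorous that the pairing of $g$ against the holomorphic completion $P_r + iQ_r$ of the Poisson kernel is genuinely holomorphic in $z$ and has real part with distributional boundary value $g$. This needs the differentiation-under-the-pairing argument together with Straube's distributional Poisson representation, plus care about the function class in which ``distributional boundary value'' is posed so that it behaves additively under the decomposition $w = w_1 + w_2$. Everything else is linearity bookkeeping built on the cited formula of Begehr.
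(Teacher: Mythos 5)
Your decomposition $w = w_1 + w_2$ — the distributional pairing against $P_r + iQ_r$ handling the boundary datum, the Begehr area integral handling the inhomogeneity, and uniqueness reduced to Straube's theorem plus the fact that a holomorphic function with identically vanishing real part is an imaginary constant — is exactly the route the paper attributes to \cite{WBD} for this result, so your proposal is correct and matches the intended proof. The one overstatement is your claim that $\re\{w_2\}$ \emph{extends continuously} to $\p D$ for $f \in L^1(D)$; that continuity is not available at this generality, and what you actually need (and what Theorem \ref{Begfirstorder} supplies, after the limit is justified against test functions) is only that $\re\{w_2\}$ has distributional boundary value $0$.
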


By iteration of the formula from Theorem \ref{nonhomogsbvp}, the following theorem generalizes the last to higher-order nonhomogeneous Cauchy-Riemann equations.

\begin{theorem}[Theorem 6.11 \cite{WBD}]\label{higherschwarz}
The Schwarz boundary value problem 
\[\begin{cases}
\frac{\p^n w}{\p\z^n} = f  & \text{ in } D\\
\re\left\{\left(\frac{\p^k w}{\p\z^k}\right)_b\right\} = h_k, & 0 \leq k \leq n -1 \\
\im\left\{\frac{\p^k w}{\p\z^k}(0)\right\} = c_k, & 0 \leq k \leq n -1, 
\end{cases}\]
for $n \in \mathbb{N}$, $f \in L^1(D, \mathbb{C})$, $h_k \in \re\{(H^{p_k}(D))_b\}$ where $p_k > \frac{1}{2}$ for $k = 1, 2, \ldots, n-1$, $h_0 \in \mathcal{D}'(\p D)$, and $c_k \in \mathbb{R}$ for $k = 0, 1, 2,\ldots, n-1$, is uniquely solved by the function 
\begin{align*}
w(z) &= i\sum_{k = 0}^{n-1}\frac{c_k}{k!}(z+\z)^k  + \sum_{k=0}^{n-1}\frac{(-1)^k}{2\pi   k!} \langle h_k, (P_r(\theta - \cdot)+iQ_r(\theta - \cdot))(e^{i(\cdot)} - re^{i\theta}+\overline{e^{i(\cdot)} - re^{i\theta}})^k\rangle \\
&\quad - \frac{1}{2\pi}\iint_{|\zeta|<1} \left( \frac{f(\zeta)}{\zeta}\,\frac{\zeta + z}{\zeta - z} + \frac{\overline{f(\zeta)}}{\overline{\zeta}}\,\frac{1+z\overline{\zeta}}{1-z\overline{\zeta}}\right)(\zeta - z+\overline{\zeta - z})^{n-1}\,d\xi\,d\eta.
\end{align*}

\end{theorem}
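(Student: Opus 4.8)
The plan is to argue by induction on $n$. The base case $n=1$ is precisely Theorem~\ref{nonhomogsbvp}, so assume the result for order $n-1$. Setting $w_1:=\p w/\p\z$, a short check shows that a function $w$ solves the order-$n$ problem if and only if $w_1$ solves the order-$(n-1)$ Schwarz problem with data $(f;\,h_1,\dots,h_{n-1};\,c_1,\dots,c_{n-1})$ and, simultaneously, $w$ solves
\[
\frac{\p w}{\p\z}=w_1\ \text{ in }D,\qquad \re\{w_b\}=h_0,\qquad \im\{w(0)\}=c_0.
\]
By the inductive hypothesis the first of these determines $w_1$ as the order-$(n-1)$ expression, and applying Theorem~\ref{nonhomogsbvp} to the displayed first-order problem (with right-hand side $w_1$) gives
\[
w(z)=ic_0+\frac{1}{2\pi}\langle h_0,\,P_r(\theta-\cdot)+iQ_r(\theta-\cdot)\rangle+S[w_1](z),
\]
where $S[g](z)=-\frac{1}{2\pi}\iint_{|\zeta|<1}\big(\tfrac{g(\zeta)}{\zeta}\tfrac{\zeta+z}{\zeta-z}+\tfrac{\overline{g(\zeta)}}{\overline\zeta}\tfrac{1+z\overline\zeta}{1-z\overline\zeta}\big)\,d\xi\,d\eta$ is the area operator of Theorem~\ref{nonhomogsbvp}. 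The proof is then finished by substituting the order-$(n-1)$ formula for $w_1$ and computing $S$ on each summand.

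Before invoking Theorem~\ref{nonhomogsbvp} one must know that $w_1$ has the integrability that theorem demands; this is where the hypotheses $p_k>\tfrac12$ enter. In the order-$(n-1)$ expression the polynomial part is bounded, the area part is a Theodorescu-type transform of $f$ and stays in the required class, and once the real power $(e^{it}-\zeta+\overline{e^{it}-\zeta})^{k}$ multiplying a boundary term is expanded into a bounded polynomial in $\re\zeta$, each boundary term becomes such a polynomial times a function of $\zeta$ lying in $H^{p_k}(D)$, and $H^{p_k}(D)\subset L^1(D)$ because $p_k>\tfrac12$. The same remarks show every function produced along the iteration is an admissible input at the next stage.

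The computational core is the action of $S$ on three kinds of building blocks, and every instance is handled identically: one writes down the proposed value of $S[\cdot]$, checks that its $\p/\p\z$ equals the given function, that its real part vanishes on $\p D$, and that its value at the origin is purely imaginary, and then appeals to the uniqueness half of Theorem~\ref{nonhomogsbvp}. The three identities are
\[
S\big[i(\zeta+\overline\zeta)^{k}\big](z)=\frac{i}{k+1}(z+\overline z)^{k+1},
\]
\[
S\Big[\,\zeta\mapsto\big\langle h,\,\tfrac{e^{i(\cdot)}+\zeta}{e^{i(\cdot)}-\zeta}\big(e^{i(\cdot)}-\zeta+\overline{e^{i(\cdot)}-\zeta}\big)^{k}\big\rangle\,\Big](z)=\frac{-1}{k+1}\big\langle h,\,\tfrac{e^{i(\cdot)}+z}{e^{i(\cdot)}-z}\big(e^{i(\cdot)}-z+\overline{e^{i(\cdot)}-z}\big)^{k+1}\big\rangle,
\]
and, for the area term, $S$ sends $\big(\tfrac{f(\zeta)}{\zeta}\tfrac{\zeta+\,\cdot\,}{\zeta-\,\cdot\,}+\tfrac{\overline{f(\zeta)}}{\overline\zeta}\tfrac{1+\,\cdot\,\overline\zeta}{1-\,\cdot\,\overline\zeta}\big)(\zeta-\,\cdot\,+\overline{\zeta-\,\cdot\,})^{m}$ to the same kernel with exponent $m+1$, times the normalizing factor forced by the computation. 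The structural facts making the uniqueness argument run are that $\zeta-z+\overline{\zeta-z}=2\re(\zeta-z)$ is real (so $\p/\p\z$ of its $(k{+}1)$-st power is $-(k{+}1)$ times its $k$-th power and it never disturbs the real/imaginary bookkeeping), that $\re\frac{e^{it}+z}{e^{it}-z}=0$ for $|z|=1$, and that $\frac{f(\zeta)}{\zeta}\frac{\zeta+z}{\zeta-z}+\frac{\overline{f(\zeta)}}{\overline\zeta}\frac{1+z\overline\zeta}{1-z\overline\zeta}$ is purely imaginary for $|z|=1$ and purely real at $z=0$. Feeding the three identities back into $w=ic_0+\frac{1}{2\pi}\langle h_0,\dots\rangle+S[w_1]$ and re-indexing the shifted sums reproduces the claimed order-$n$ formula, the $ic_0$ and $h_0$ terms supplying the $k=0$ summands.

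I expect the main obstacle to be the area-term identity. The individual kernels $\frac{\zeta+z}{\zeta-z}$ carry a non-integrable pole at an interior point, so $S$ cannot be moved through the inner $\zeta$-integral termwise without justification; the clean fix is to establish the identity first for $f\in C^\infty_c(D)$ (or for $f$ supported away from a fixed neighborhood of the relevant point) and then pass to general $f$ by density together with the continuity of the Theodorescu transform, or else to bypass the substitution and verify directly that the proposed $w$ satisfies $\p^n w/\p\z^n=f$ via the $n$-fold Pompeiu reproducing property. Finally, uniqueness is obtained by peeling: if $w$ and $\widetilde w$ both solve the problem, then $v:=w-\widetilde w$ has $\p^n v/\p\z^n=0$ with $\re\{(\p^k v/\p\z^k)_b\}=0$ and $\im\{\p^k v/\p\z^k(0)\}=0$ for $0\le k\le n-1$; applying the uniqueness statement of Theorem~\ref{nonhomogsbvp} to $\p^{n-1}v/\p\z^{n-1}$ forces it to vanish identically, and descending from $k=n-1$ down to $k=0$ yields $v\equiv0$.
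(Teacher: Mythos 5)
The paper does not actually prove this statement: it is imported verbatim from \cite{WBD} (Theorem 6.11 there), and the only justification offered here is the sentence that it follows ``by iteration of the formula from Theorem \ref{nonhomogsbvp}.'' Your induction is precisely that iteration made explicit, so in approach you are aligned with the source. The reduction of the order-$n$ problem to the order-$(n-1)$ problem plus a first-order problem with right-hand side $w_1=\p w/\p\z$ is correct in both directions; your integrability check is the right one (the paper's own remark invokes Theorem 5.11 of \cite{Duren}, i.e.\ $H^{p}(D)\subset L^{1}(D,dA)$ for $p>\tfrac12$, exactly as you do); and your method of verifying each transform identity by checking $\p/\p\z$, the vanishing real part on $\p D$, and the imaginary part at $0$, then invoking uniqueness in Theorem \ref{nonhomogsbvp}, is sound. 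Your first two identities check out and produce exactly the coefficients $\tfrac{1}{k!}$ and $\tfrac{(-1)^k}{k!}$ appearing in the constant and boundary terms.

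The one genuine problem is your final claim that feeding the identities back ``reproduces the claimed order-$n$ formula.'' It does not, because of the area term. Each application of $S$ to the area block multiplies it by the normalizing factor $\tfrac{-1}{m+1}$ that you leave implicit (since $\p_{\z}(\zeta-z+\overline{\zeta-z})^{m+1}=-(m+1)(\zeta-z+\overline{\zeta-z})^{m}$), so the iteration produces
\[
\frac{(-1)^{n-1}}{(n-1)!}\cdot\Bigl(-\frac{1}{2\pi}\Bigr)\iint_{|\zeta|<1}\Bigl(\frac{f(\zeta)}{\zeta}\,\frac{\zeta+z}{\zeta-z}+\frac{\overline{f(\zeta)}}{\overline{\zeta}}\,\frac{1+z\overline{\zeta}}{1-z\overline{\zeta}}\Bigr)(\zeta-z+\overline{\zeta-z})^{n-1}\,d\xi\,d\eta,
\]
whereas the displayed statement carries no $\tfrac{(-1)^{n-1}}{(n-1)!}$. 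Already for $n=2$ the printed area term satisfies $\p^{2}(\cdot)/\p\z^{2}=-f$, not $f$. The boundary terms in the printed formula do carry the analogous accumulated factor $\tfrac{(-1)^k}{k!}$, which corroborates that the iteration is the intended proof and that the area term as printed is missing its factor (equivalently, it is not equal to the iterate $T^{n}(f)$ that the background section asserts satisfies $\p^{n}T^{n}f/\p\z^{n}=f$). You should either exhibit this factor explicitly and note the discrepancy with the statement, or you cannot claim to have proved the formula as written. A minor further point: $1/|\zeta-z|$ is locally area-integrable, so the obstacle in the area-term identity is not a ``non-integrable pole'' but the Fubini interchange for the iterated weakly singular kernel; your proposed density-plus-continuity fix handles that correctly.
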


\begin{comm}
See Remark \ref{firstorderhardycomment} for the definition of $\re\{(H^{p}(D))_b\} \subset \mathcal{D}'(\p D)$. Also, Theorem \ref{higherschwarz} reads exactly the same if the $h_k$, $0\leq k \leq n-1$ are continuous functions on $\p D$, and in that case, the pairings of the distributions against the kernel are proper integrals. For $k \geq 1$, the distributions $h_k$ must be real parts of distributional boundary values of functions in complex-holomorphic Hardy spaces to guarantee that the function constructed by pairing the distribution against the Poisson kernel plus $i$ times the conjugate Poisson kernel is an $L^1(D, \mathbb{C})$ function, see Theorem 6.2 of \cite{GHJH2} and Theorem 5.11 of \cite{Duren}. In the absence of this guarantee, the iteration scheme used to construct the solution function may not make sense. 
\end{comm}

The bicomplex numbers $\bc$ are the set $\mathbb{C}^2$ with the usual addition and multiplication by scalars. The multiplication operation between two elements $z = (z_1, z_2), w=(w_1,w_2) \in \bc$ is defined by 
\[
        zw = (z_1,z_2) (w_1,w_2) = (z_1w_1-z_2w_2, z_1w_2 + z_2w_1).
\]
Using the imaginary unit $j$, $z = (z_1, z_2) \in \bc$ is representable as $z = z_1 + jz_2$. Using this representation, the multiplication rules allows multiplication to computationally resemble the multiplication between complex numbers, i.e., 
\[
    zw = (z_1 + jz_2) (w_1 + jw_2) = z_1w_1-z_2w_2 + j( z_1w_2 + z_2w_1).
\]
In particular (and in contrast to the quaternions), multiplication of bicomplex number is a commutative operation. See \cite{BCAtomic, BCHoiv, BCHarmVek, PriceMulti, CastaKrav, BCTransmutation, BCBergman, FundBicomplex, KravAPFT, BCHolo,ComplexSchr} for more background on the bicomplex numbers and their role in analysis.

Next, we provide definitions and known results concerning the bicomplex numbers that we use in the sequel. 

\begin{prop}[Proposition 1 \cite{FundBicomplex}]\label{bcidemrep}
    For every $z \in \bc$, there exist unique $z^+, z^- \in \mathbb{C}$ such that
    \[
        z = p^+ z^+  + p^- z^-,
    \]
    where $p^\pm := \frac{1}{2} \left( 1 \pm ji \right)$.
\end{prop}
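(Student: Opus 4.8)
The plan is to reduce the statement to a $2\times 2$ complex-linear system. Write a general bicomplex number as $z = z_1 + j z_2$ with $z_1,z_2\in\C$, and suppose $z = p^+ z^+ + p^- z^-$ for some $z^+,z^-\in\C$. Since $p^\pm=\tfrac12(1\pm ji)$ and since $i$ commutes with $j$ and with every element of $\C$, expanding the right-hand side gives
\[
p^+ z^+ + p^- z^- \;=\; \tfrac12\bigl(z^+ + z^-\bigr) \;+\; j\cdot\tfrac{i}{2}\bigl(z^+ - z^-\bigr),
\]
and both coefficients $\tfrac12(z^+ + z^-)$ and $\tfrac{i}{2}(z^+ - z^-)$ lie in $\C$. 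Because the representation of an element of $\bc=\C^2$ in the form $a+jb$ with $a,b\in\C$ is unique by definition, matching with $z=z_1+jz_2$ yields the system $z^+ + z^- = 2z_1$ and $i(z^+ - z^-) = 2z_2$.

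This system has determinant $-2i\neq 0$, hence the unique solution $z^+ = z_1 - i z_2$, $z^- = z_1 + i z_2$; existence and uniqueness are thereby obtained simultaneously. To finish I would substitute these two values back into $p^+ z^+ + p^- z^-$ and check, using only $j^2=-1$ and $ij=ji$, that the sum collapses to $z_1 + j z_2$, which is a one-line computation. An equivalent, more structural presentation records first the idempotent identities $p^+ + p^- = 1$, $(p^\pm)^2 = p^\pm$, $p^+p^- = 0$, and $p^\pm j = \mp\, i\, p^\pm$ (all immediate from the definitions and commutativity); then existence follows from $z=(p^++p^-)z = p^+(z_1 - iz_2)+p^-(z_1+iz_2)$, and uniqueness follows by multiplying a purported identity through by $p^+$ and by $p^-$ and invoking orthogonality together with the injectivity of $a\mapsto p^\pm a$ on the copy of $\C$ sitting inside $\bc$.

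There is no real obstacle here; the only point requiring care is the bookkeeping of the two imaginary units, in particular that $ji=ij$ satisfies $(ji)^2=1$, so that $p^+$ and $p^-$ are genuine zero divisors in $\bc$ (for instance $p^+p^-=0$) even though they act injectively on $\C\cdot 1\subset\bc$, and that the coefficients produced by the linear system genuinely land in $\C$ rather than in the larger ring.
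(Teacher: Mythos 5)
Your argument is correct and complete: expanding $p^+z^+ + p^-z^-$ into the form $a+jb$, matching components, and solving the resulting nonsingular $2\times 2$ system gives $z^\pm = z_1 \mp i z_2$, which is exactly the standard idempotent decomposition. The paper itself states this proposition as an imported result (Proposition 1 of the cited reference) and supplies no proof, so there is nothing to compare against; your derivation is the canonical one, and the auxiliary identities you record ($p^++p^-=1$, $p^+p^-=0$, $(p^\pm)^2=p^\pm$, $p^\pm j = \mp i\,p^\pm$) are precisely the ones the rest of the paper relies on implicitly.
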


Note that the bicomplex numbers $p^\pm$ are nonzero zero divisors and idempotent elements of the bicomplex numbers. With the above representation, we define a norm on the bicomplex numbers. 

\begin{deff}\label{bcnormdeff}
    For every $z \in \bc$ with idempotent representation $z = p^+ z^+  + p^- z^-$, $z^\pm \in \mathbb{C}$, we define the norm $||\cdot||_{\bc}$ by 
    \[  
        ||z||_{\bc} := \sqrt{\frac{|z^+|^2 + |z^-|^2}{2}}.
    \]  
\end{deff}

Using this norm, we define the bicomplex version of the Lebesgue spaces.

\begin{deff}
    For $0 < p < \infty$, we define $L^p(D,\bc)$ to be the collection of functions $f: D \to\bc$ such that
    \[
        ||f||_{L^p_\bc} := \left( \iint_D ||f(z)||_{\bc}^p\,dx\,dy\right)^{1/p} < \infty.
    \]
\end{deff}

\begin{deff}\label{bcdbardeff}
We define the bicomplex differential operators $\partial$ and $\dbar$ as
\[
    \partial :=  p^+\frac{\p}{\p \z} + p^- \frac{\p}{\p z}.
\]
and
\[
    \dbar :=  p^+\frac{\p}{\p z} + p^- \frac{\p}{\p \z},
\]
where $\frac{\p}{\p z}$ and $\frac{\p}{\p\z}$ are the complex Wirtinger derivatives. 
\end{deff}

\begin{deff}
     We say a function $f: D \to\bc$ is $\bc$-holomorphic whenever
     \[
        \dbar f = 0.
     \]
\end{deff}  

Differential equations involving these bicomplex differential operators were considered in, for example, \cite{BCAtomic, BCHoiv, BCHarmVek, BCTransmutation, BCBergman, FundBicomplex}. With this definition, the idempotent representation leads to a useful relationship between $\bc$-holomorphic functions and the complex holomorphic functions.

\begin{prop}[Remark 3 \cite{BCAtomic} (or \cite{FundBicomplex, BCTransmutation, BCBergman})]\label{holoiffbcholo}
    For a function $w = p^+w^+ + p^- w^- : D \to \bc$, $\dbar w = 0$ if and only if $\overline{w^+}, w^- \in Hol(D)$. 
\end{prop}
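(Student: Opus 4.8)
The plan is to turn the single bicomplex equation $\dbar w = 0$ into a decoupled pair of scalar equations by exploiting the idempotent algebra of $p^{\pm}$, and then to recognize each scalar equation as a (possibly conjugated) Cauchy--Riemann equation. The starting point is the collection of elementary identities $p^{+}+p^{-}=1$, $(p^{+})^{2}=p^{+}$, $(p^{-})^{2}=p^{-}$, and $p^{+}p^{-}=p^{-}p^{+}=0$, all of which follow at once from $p^{\pm}=\tfrac12(1\pm ji)$ together with $(ji)^{2}=1$; this is just the statement, already noted above, that the $p^{\pm}$ are complementary idempotents.

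Next I would compute $\dbar w$ directly. Since $p^{+}$ and $p^{-}$ are constants and bicomplex multiplication is commutative, they factor through the complex Wirtinger derivatives, so expanding
\[
\dbar w=\Bigl(p^{+}\tfrac{\p}{\p z}+p^{-}\tfrac{\p}{\p\z}\Bigr)\bigl(p^{+}w^{+}+p^{-}w^{-}\bigr)
\]
produces four terms: the two cross terms carry the factor $p^{+}p^{-}=0$ and vanish, while the two diagonal terms carry $(p^{\pm})^{2}=p^{\pm}$ and survive unchanged, giving
\[
\dbar w=p^{+}\frac{\p w^{+}}{\p z}+p^{-}\frac{\p w^{-}}{\p\z}.
\]

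Now I would invoke Proposition \ref{bcidemrep}: the idempotent representation of a bicomplex number is unique, so $\dbar w=0$ holds on $D$ if and only if both $\frac{\p w^{+}}{\p z}=0$ and $\frac{\p w^{-}}{\p\z}=0$ hold on $D$. The second equation is exactly the assertion $w^{-}\in Hol(D)$. For the first, I would apply the conjugation rule for Wirtinger derivatives, $\overline{\tfrac{\p f}{\p z}}=\tfrac{\p\bar f}{\p\z}$, to see that $\frac{\p w^{+}}{\p z}=0$ is equivalent to $\frac{\p\,\overline{w^{+}}}{\p\z}=0$, i.e.\ to $\overline{w^{+}}\in Hol(D)$. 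Combining the two equivalences yields the proposition.

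The argument is essentially a bookkeeping computation; the only places that deserve attention are the cancellation pattern in the four-term expansion (cross terms killed by $p^{+}p^{-}=0$, diagonal terms fixed by idempotency) and the reminder that it is $\overline{w^{+}}$, not $w^{+}$, that is forced to be holomorphic. I would also note in passing that the manipulations presume $w$ is regular enough for the Wirtinger derivatives to be taken in the classical sense (or else that they are read distributionally), which is implicit in the function classes under consideration.
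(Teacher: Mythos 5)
Your computation is correct, and it is worth noting that the paper itself supplies no proof of this proposition at all --- it is imported as a cited fact (Remark 3 of \cite{BCAtomic} and the other references), so there is no in-paper argument to compare against. What you have written is precisely the standard verification from that literature: the idempotent identities $(p^{\pm})^{2}=p^{\pm}$ and $p^{+}p^{-}=0$ kill the cross terms and give $\dbar w = p^{+}\frac{\p w^{+}}{\p z}+p^{-}\frac{\p w^{-}}{\p\z}$; uniqueness of the idempotent representation (Proposition \ref{bcidemrep}, applied pointwise) decouples this into the two scalar equations; and the conjugation rule $\overline{\p f/\p z}=\p\bar f/\p\z$ explains why it is $\overline{w^{+}}$, rather than $w^{+}$, that lands in $Hol(D)$. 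Your closing caveat about regularity is also apt: the equivalence with membership in $Hol(D)$ tacitly uses that a function annihilated by $\p/\p\z$ in the appropriate (classical or distributional) sense is genuinely holomorphic, which is the standing assumption for the function classes used throughout the paper. No gaps.
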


\begin{comm}
    In contrast to some of the literature concerning bicomplex numbers, we continue to use $\overline{(\cdot)}$ to indicate the usual complex conjugation throughout. We do this to aid readers more familiar with the study of functions of complex variables and since we have no need for bicomplex conjugation (in any of its forms) in the sequel. 
\end{comm}

From \cite{BCTransmutation, BCBergman, FundBicomplex}, the integral operator
\[
\mathcal{T}(f) := p^+ \left(-\frac{1}{2 \pi} \iint_D \frac{f^+(\zeta)}{\overline{\zeta - z}} \,d\xi\,d\eta \right) + p^- \left( -\frac{1}{2 \pi} \iint_D \frac{f^-(\zeta)}{\zeta - z}\,d\xi\,d\eta\right),
\]
for $f \in L^1(D,\bc)$, satisfies
\[
    \dbar \mathcal{T}(f) = f
\]
because
\[
    \frac{\p}{\p z}\left(-\frac{1}{2 \pi} \iint_D \frac{f^+(\zeta)}{\overline{\zeta - z}} \,d\xi\,d\eta \right) = f^+(z)
\]
and 
\[
    \frac{\p}{\p \z}\left( -\frac{1}{2 \pi} \iint_D \frac{f^-(\zeta)}{\zeta - z}\,d\xi\,d\eta\right) = f^-(z)
\]
where $\zeta = \xi + i \eta$. See \cite{BegBook, Beg, hoio} for more information about these integral operators in the complex setting.

We now define the integral operator $T_\bc$, acting on functions $f \in L^1(D,\bc)$, by 
\[
    T_\bc(f)(z):= p^+ T_*(f^+)(z) + p^- T(f^-)(z),
\]
where 
\[
T(f^-)(z) := -\frac{1}{2 \pi} \iint_D \left(\frac{f^-(\zeta)}{\zeta} \frac{\zeta+z}{\zeta - z} + \frac{\overline{f^-(\zeta)}}{\overline{\zeta}} \frac{1+z\overline{\zeta}}{1-z\overline{\zeta}} \right)\,d\xi\,d\eta
\]
and
\[
T_*(f^+)(z) := -\frac{1}{2 \pi} \iint_D \left(\frac{f^+(\zeta)}{\overline{\zeta}} \frac{\overline{\zeta+z}}{\overline{\zeta - z}} + \frac{\overline{f^+(\zeta)}}{\zeta} \frac{1+\z\zeta}{1-\z\zeta} \right) \,d\xi\,d\eta.
\]
Note that for $\sigma : D \to \mathbb{C}$, $T_*(\sigma) = T(\sigma^*)^*$. Thus, $\frac{\p }{\p \z} T(f^-) = f^-$, and $\frac{\p }{\p z} T(f^+) = f^+$, by Theorem \ref{Begfirstorder}. Therefore,
\[
    \dbar T_\bc(f) = f,
\]
and by construction (see Theorem \ref{Begfirstorder}), $T_\bc(f)$ satisfies the following conditions:
\[
    \begin{cases}
        \re\{T_\bc(f)^+\}|_{\p D} = \re\{T_*(f^+)\}|_{\p D}= 0\\
        \re\{T_\bc(f)^-\}|_{\p D} = \re\{T(f^-)\}|_{\p D} = 0\\
        \im\{T_\bc(f)^+(0)\} = \im\{T_*(f^+)(0)\}= 0\\
        \im\{T_\bc(f)^-(0)\} = \im\{T(f^-)0\} = 0\\
    \end{cases}.
\]
Also, we can iterate and have
\[
    T^n_\bc(f)(z) = p^+T^n_*(f^+)(z) + p^- T^n(f^-)(z),
\]
for every positive integer $n$, and 
\[
 \dbar^m T^n_\bc(f) = 
    \begin{cases}
       f, & n = m\\
       T^{n-m}_\bc(f), & m < n\\
       \dbar^{m - n} f, & m > n
    \end{cases}.
\] 

Therefore, $T_\bc$ is an integral operator that behaves in an analogous way to the Cauchy-type integral from Theorem \ref{nonhomogsbvp} in the bicomplex setting. That is, the operator acts as a right-inverse to $\dbar$, satisfies similar boundary and pointwise conditions at the origin, and maintains this behavior after iteration.

\section{Bicomplex Schwarz Boundary Value Problem}\label{bcschwarz}

We consider a bicomplex analogue of the classic Schwarz boundary value problem from complex analysis. See \cite{BegBook, Beg} for background about this problem in the complex setting.  We show that the problem in the bicomplex setting is solvable, solutions are unique, and give a constructive representation of the solution. 

\begin{theorem}\label{bcholoSchwarzthm}
For $b_1, b_2 \in C(\p D, \R)$ and $c_1, c_2 \in \R$, the bicomplex-Schwarz boundary value problem 
\[
\begin{cases}
    \dbar w = 0\\
    \re\{w^+\}|_{\p D} = b_1 \\
    \re\{w^-\}|_{\p D} = b_2\\
    \im\{ w^+(0)\}  = c_1 \\
    \im\{ w^-(0) \} = c_2
\end{cases}
\]
is uniquely solved by 
\begin{align*}
w(z) &= p^+ \left(\frac{1}{2\pi i }\int_{|\zeta| = 1} b_1(\zeta) \frac{\overline{\zeta + z}}{\overline{\zeta - z}} \frac{d\zeta}{\zeta} + ic_1 \right) + p^- \left(\frac{1}{2\pi i }\int_{|\zeta| = 1} b_2(\zeta) \frac{\zeta + z}{\zeta - z} \frac{d\zeta}{\zeta} +ic_2 \right) .
\end{align*}

\end{theorem}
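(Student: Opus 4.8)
The plan is to use the idempotent representation $w = p^+ w^+ + p^- w^-$ together with Proposition \ref{holoiffbcholo} to split the single bicomplex problem into two independent complex Schwarz problems, solve each by Theorem \ref{Begfirstorder} with $f \equiv 0$, and then reassemble via the uniqueness of the idempotent representation (Proposition \ref{bcidemrep}).

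First I would note that, by Proposition \ref{holoiffbcholo}, the condition $\dbar w = 0$ is equivalent to $\overline{w^+} \in Hol(D)$ and $w^- \in Hol(D)$, and that the remaining conditions decouple into the two systems
\[
\re\{w^-\}|_{\p D} = b_2, \qquad \im\{w^-(0)\} = c_2
\]
for the holomorphic function $w^-$, and
\[
\re\{w^+\}|_{\p D} = b_1, \qquad \im\{w^+(0)\} = c_1
\]
for the anti-holomorphic function $w^+$. The first system is exactly the classical Schwarz problem of Theorem \ref{Begfirstorder} with vanishing right-hand side, so it is uniquely solved by $w^-(z) = ic_2 + \frac{1}{2\pi i}\int_{|\zeta|=1} b_2(\zeta)\frac{\zeta+z}{\zeta-z}\frac{d\zeta}{\zeta}$, which is the $p^-$ component of the displayed formula.

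For the second system I would set $v := \overline{w^+}$, so that $v \in Hol(D)$ with $\re\{v\} = \re\{w^+\}$ and $\im\{v(0)\} = -\im\{w^+(0)\}$; hence $v$ solves the classical Schwarz problem with boundary datum $b_1$ and origin condition $\im\{v(0)\} = -c_1$. Theorem \ref{Begfirstorder} then determines $v$ uniquely, and conjugating back gives $w^+ = \overline{v}$. Parametrizing the boundary integral by $\zeta = e^{it}$ and using that $b_1$ is real-valued shows $\overline{\frac{1}{2\pi i}\int_{|\zeta|=1} b_1(\zeta)\frac{\zeta+z}{\zeta-z}\frac{d\zeta}{\zeta}} = \frac{1}{2\pi i}\int_{|\zeta|=1} b_1(\zeta)\frac{\overline{\zeta+z}}{\overline{\zeta-z}}\frac{d\zeta}{\zeta}$, so $w^+(z) = ic_1 + \frac{1}{2\pi i}\int_{|\zeta|=1} b_1(\zeta)\frac{\overline{\zeta+z}}{\overline{\zeta-z}}\frac{d\zeta}{\zeta}$, which is the $p^+$ component of the displayed formula.

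Existence then follows by reading off that the displayed $w$ has precisely these two components (Proposition \ref{bcidemrep}), and uniqueness follows because any two solutions differ by a bicomplex function whose $\pm$ components solve, respectively, the homogeneous holomorphic and anti-holomorphic Schwarz problems, each of which admits only the zero solution by Theorem \ref{Begfirstorder}. The one genuinely delicate point — and the step I would be most careful about — is the anti-holomorphic component: correctly passing from $w^+$ to $v = \overline{w^+}$, tracking the sign change in the imaginary part at the origin, and verifying that complex conjugation sends the Schwarz kernel $\frac{\zeta+z}{\zeta-z}$ to $\frac{\overline{\zeta+z}}{\overline{\zeta-z}}$ while leaving $\frac{d\zeta}{\zeta}$ and the real datum $b_1$ untouched.
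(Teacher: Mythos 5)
Your proposal is correct and follows essentially the same route as the paper: decouple via the idempotent representation and Proposition \ref{holoiffbcholo}, solve the $w^-$ component directly by Theorem \ref{Begfirstorder}, handle the $w^+$ component by conjugating to a holomorphic Schwarz problem with the sign-flipped origin condition $-c_1$, and conclude uniqueness from the uniqueness of the idempotent components. Your explicit verification that conjugation carries the Schwarz kernel to $\frac{\overline{\zeta+z}}{\overline{\zeta-z}}$ while fixing $\frac{d\zeta}{\zeta}$ and the real datum is a detail the paper leaves implicit, but the argument is the same.
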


\begin{proof}
Observe that 
\[
    f(re^{i\theta}) := \frac{1}{2\pi i }\int_{|\zeta|=1} b_2(\zeta) \frac{\zeta + z}{\zeta - z} \frac{d\zeta}{\zeta} +ic_2
\]
uniquely solves the complex-Schwarz boundary value problem
\[
\begin{cases}
    \frac{\p f}{\p \z} = 0\\
    \re\{f\}|_{\p D} = b_2\\
    \im\{ f(0) \} = c_2
\end{cases},
\]
by Corollary \ref{Begfirstorder}. Also, the function 
\[
    g(re^{i\theta}) :=  \frac{1}{2\pi i }\int_{|\zeta| = 1} b_1(\zeta) \frac{\zeta + z}{\zeta - z} \frac{d\zeta}{\zeta} - ic_1
\]
uniquely solves the complex-Schwarz boundary value problem
\[
\begin{cases}
    \frac{\p g}{\p \z} = 0\\
    \re\{g\}|_{\p D} = b_1\\
    \im\{ g(0) \} = -c_1
\end{cases},
\]
by Corollary \ref{Begfirstorder}. Hence, $\overline{g}$ solves 
\[
\begin{cases}
    \frac{\p \overline{g}}{\p z} = 0\\
    \re\{\overline{g}\}|_{\p D} = b_1\\
    \im\{ \overline{g(0)} \} = c_1
\end{cases},
\]
see Corollary 2.2 of \cite{Beg}. Define $w:= p^+ \overline{g} + p^- f$.  By Proposition \ref{holoiffbcholo}, $\dbar w = 0$ if and only if $\overline{w^+}, w^- \in Hol(D)$. Since $\overline{w^+}, w^- \in Hol(D)$, it follows that $\dbar w= 0$. Note also that $w^+$ and $w^-$ satisfy the required boundary and pointwise conditions by their construction. Since the idempotent representation is unique, see Proposition \ref{bcidemrep}, and $w$ solves the boundary value problem, it follows that $\overline{g}$ and $f$ are the unique $w^+$ and $w^-$, respectively, of a solution $w$. Since $w^\pm$ uniquely determine $w$, it follows that $w$, as constructed, is the unique solution to the boundary value problem. 
\end{proof}

\begin{theorem}\label{bcnonhomogfirstorderthm}
For $b_1, b_2 \in C(\p D, \R)$, $c_1, c_2 \in \R$, and $f \in L^1(D, \bc)$, the nonhomogeneous bicomplex-Schwarz boundary value problem 
\[
\begin{cases}
    \dbar w = f\\
    \re\{w^+\}|_{\p D} = b_1 \\
    \re\{w^-\}|_{\p D} = b_2\\
    \im\{ w^+(0)\}  = c_1 \\
    \im\{ w^-(0) \} = c_2
\end{cases}
\]
is uniquely solved by 
\begin{align*}
w(z) &= p^+ \left(\frac{1}{2\pi i }\int_{|\zeta|=1} b_1(\zeta) \frac{\overline{\zeta + z}}{\overline{\zeta - z}} \frac{d\zeta}{\zeta} + ic_1  \right)  \\
&\quad\quad + p^- \left(\frac{1}{2\pi i }\int_{|\zeta|=1} b_2(\zeta) \frac{\zeta + z}{\zeta - z} \frac{d\zeta}{\zeta} +ic_2 \right) +T_\bc(f)(z).
\end{align*}

\end{theorem}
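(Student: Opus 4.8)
The plan is to split the bicomplex problem, via the idempotent representation, into two scalar Schwarz problems covered by Theorem~\ref{Begfirstorder}, exactly as in the proof of Theorem~\ref{bcholoSchwarzthm}, the only new point being that the inhomogeneity $f$ is accounted for precisely by the operator $T_\bc$. Write $w = p^+ w^+ + p^- w^-$ and $f = p^+ f^+ + p^- f^-$. From Definition~\ref{bcdbardeff} and the idempotent identities $(p^\pm)^2 = p^\pm$, $p^+ p^- = 0$, one has $\dbar w = p^+ \frac{\p w^+}{\p z} + p^- \frac{\p w^-}{\p\z}$, so that $\dbar w = f$ is equivalent to the decoupled pair $\frac{\p w^+}{\p z} = f^+$ and $\frac{\p w^-}{\p\z} = f^-$; by uniqueness of the idempotent representation (Proposition~\ref{bcidemrep}) the four boundary and interior-point conditions split the same way, one each on $w^+$ and on $w^-$.

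The $p^-$-slot is immediate: $w^-$ must solve $\frac{\p w^-}{\p\z} = f^-$ with $\re\{w^-\}|_{\p D} = b_2$ and $\im\{w^-(0)\} = c_2$, so Theorem~\ref{Begfirstorder} gives the unique solution
\[
w^-(z) = \frac{1}{2\pi i}\int_{|\zeta|=1} b_2(\zeta)\,\frac{\zeta+z}{\zeta-z}\,\frac{d\zeta}{\zeta} + ic_2 + T(f^-)(z).
\]
For the $p^+$-slot I would pass to $v := \overline{w^+}$: then $\frac{\p v}{\p\z} = \overline{f^+}$, $\re\{v\}|_{\p D} = b_1$, and $\im\{v(0)\} = -c_1$, so Theorem~\ref{Begfirstorder} (applied with data $\overline{f^+}$, $b_1$, $-c_1$) determines $v$ uniquely. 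Conjugating back, using that $b_1$ is real-valued so that conjugation turns the Schwarz kernel $\frac{\zeta+z}{\zeta-z}$ into $\frac{\overline{\zeta+z}}{\overline{\zeta-z}}$, and using the identity $\overline{T_*(f^+)} = T(\overline{f^+})$ noted right after the definition of $T_\bc$, one obtains
\[
w^+(z) = \frac{1}{2\pi i}\int_{|\zeta|=1} b_1(\zeta)\,\frac{\overline{\zeta+z}}{\overline{\zeta-z}}\,\frac{d\zeta}{\zeta} + ic_1 + T_*(f^+)(z).
\]
Reassembling $w = p^+ w^+ + p^- w^-$ and recalling $T_\bc(f) = p^+ T_*(f^+) + p^- T(f^-)$ reproduces the stated formula. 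For uniqueness, if $w_1,w_2$ are two solutions then $w_1 - w_2$ solves the bicomplex Schwarz problem with $\dbar(w_1-w_2) = 0$ and the remaining four data equal to zero, so $w_1 = w_2$ by Theorem~\ref{bcholoSchwarzthm}; equivalently, uniqueness passes componentwise from the uniqueness clause of Theorem~\ref{Begfirstorder}.

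I do not expect a deep obstacle beyond the bookkeeping already present in the homogeneous case. The one point that needs care is the conjugation in the $p^+$-component, where the kernel $\frac{\zeta+z}{\zeta-z}$ and the constant $ic_1$ must be tracked through complex conjugation and through the identity relating $T_*$ to $T$, together with the consistency check that the boundary conditions are to be read in a sense for which the boundary value of a sum is the sum of the boundary values. The latter is automatic here: the Schwarz-kernel integrals of the continuous functions $b_1, b_2$ extend continuously to $\p D$ with traces $b_1, b_2$, while $T_*(f^+)$ and $T(f^-)$ contribute zero real-part boundary values and zero imaginary parts at the origin, as already recorded when $T_\bc$ was introduced. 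Thus the only substantive input beyond Theorems~\ref{Begfirstorder} and~\ref{bcholoSchwarzthm} is the relation $\dbar T_\bc(f) = f$, which was verified there.
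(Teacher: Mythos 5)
Your proposal is correct and follows essentially the same route as the paper: the paper's own proof is a one-line appeal to Theorem~\ref{bcholoSchwarzthm} plus the properties of $T_\bc$ recorded in Section~\ref{BackgroundSection}, and your componentwise application of Theorem~\ref{Begfirstorder} (with the conjugation trick in the $p^+$-slot and the identity $\overline{T_*(f^+)} = T(\overline{f^+})$) is exactly the computation that appeal compresses. Your uniqueness argument via the difference solving the homogeneous problem is likewise the intended one.
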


\begin{proof}
This is a direct application of Theorem \ref{bcholoSchwarzthm} and the properties of the operator $T_\bc$ discussed in Section \ref{BackgroundSection}.
\end{proof}

By appealing to Theorem \ref{nonhomogsbvp}, we generalize Theorem \ref{bcnonhomogfirstorderthm} so that the boundary condition is with respect to a distributional boundary value. This is the roughest boundary condition where the Schwarz boundary value problem is known to be solvable.

\begin{theorem}\label{nonhomogbcdistrbvfirstorder}
For $b_1, b_2 \in \mathcal{D}'(\p D)$, $c_1, c_2 \in \R$, and $f \in L^1(D, \bc)$, the bicomplex-Schwarz boundary value problem 
\[
\begin{cases}
    \dbar w = f\\
    \re\{w^+_b\} = b_1 \\
    \re\{w^-_b\} = b_2\\
    \im\{ w^+(0)\}  = c_1 \\
    \im\{ w^-(0) \} = c_2
\end{cases}
\]
is uniquely solved by 
\begin{align*}
w(z) &= p^+ \left( \overline{\left[\frac{1}{2\pi} \langle b_1, P_r(\theta-\cdot) + i Q_r(\theta - \cdot)\rangle\right]} + ic_1  \right)  \\
&\quad\quad + p^- \left(\frac{1}{2\pi} \langle b_2, P_r(\theta-\cdot) + i Q_r(\theta - \cdot)\rangle +ic_2 \right) +T_\bc(f)(z).
\end{align*}

\end{theorem}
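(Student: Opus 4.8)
The plan is to mirror the proof of Theorem \ref{bcnonhomogfirstorderthm}, but with the continuous-boundary-value Schwarz solution replaced by the distributional-boundary-value solution from Theorem \ref{nonhomogsbvp}. First I would set $w := p^+ \overline{g} + p^- f_0 + T_\bc(f)$, where $f_0(z) := \frac{1}{2\pi}\langle b_2, P_r(\theta-\cdot) + iQ_r(\theta-\cdot)\rangle + ic_2$ is, by Theorem \ref{nonhomogsbvp} with $f \equiv 0$, the unique solution of the complex Schwarz problem $\frac{\p f_0}{\p\z}=0$, $\re\{(f_0)_b\}=b_2$, $\im\{f_0(0)\}=c_2$, and $g(z) := \frac{1}{2\pi}\langle b_1, P_r(\theta-\cdot)+iQ_r(\theta-\cdot)\rangle - ic_1$ is likewise the unique solution of the complex Schwarz problem with data $b_1$ and $\im\{g(0)\}=-c_1$. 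Passing to the conjugate (cf. Corollary 2.2 of \cite{Beg}), $\overline{g}$ satisfies $\frac{\p\overline{g}}{\p z}=0$, $\re\{(\overline{g})_b\}=b_1$, and $\im\{\overline{g(0)}\}=c_1$; here I would note that conjugation commutes with taking distributional boundary values since $\overline{(\cdot)}$ acts pointwise and $\re\{\overline{g}\}=\re\{g\}$.

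Next I would verify that $w$ solves the stated problem. That $\dbar w = f$ follows from Proposition \ref{holoiffbcholo} applied to $p^+\overline{g}+p^-f_0$ (whose components $\overline{w^+}=g$ and $w^-=f_0$ are holomorphic) together with the fact, recorded in Section \ref{BackgroundSection}, that $\dbar T_\bc(f)=f$. For the boundary and pointwise conditions, I would use that $T_\bc(f)$ contributes zero to each of $\re\{w^+_b\}$, $\re\{w^-_b\}$, $\im\{w^+(0)\}$, $\im\{w^-(0)\}$ — this is exactly the list of four conditions displayed for $T_\bc(f)$ in the background section — so the boundary data of $w^+$ and $w^-$ are inherited verbatim from $\overline{g}$ and $f_0$, which were arranged to match $b_1,c_1$ and $b_2,c_2$. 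One technical point to spell out is that the boundary value of a sum is the sum of the boundary values, which holds because the defining limit in the distributional boundary value is linear.

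For uniqueness I would argue as in Theorem \ref{bcholoSchwarzthm}: if $w = p^+ w^+ + p^- w^-$ is any solution, then decomposing $f = p^+ f^+ + p^- f^-$ and using Definition \ref{bcdbardeff}, the equation $\dbar w = f$ splits into $\frac{\p w^+}{\p z} = f^+$ and $\frac{\p w^-}{\p\z} = f^-$; equivalently $\frac{\p \overline{w^+}}{\p\z} = \overline{f^+}$. Thus $\overline{w^+}$ solves the complex nonhomogeneous Schwarz problem with data $b_1$, $\im\{\overline{w^+}(0)\}=c_1$, nonhomogeneity $\overline{f^+}$, and $w^-$ solves the one with data $b_2$, $c_2$, nonhomogeneity $f^-$. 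By the uniqueness clause of Theorem \ref{nonhomogsbvp}, $\overline{w^+}$ and $w^-$ are forced, hence $w^+,w^-$ are forced, and since the idempotent representation is unique (Proposition \ref{bcidemrep}), $w$ is forced.

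The main obstacle is essentially bookkeeping rather than a deep difficulty: one must be careful that the distributional boundary value behaves well under complex conjugation and under the splitting $w = p^+w^+ + p^-w^-$ — in particular that $w$ has distributional boundary values in the required sense exactly when its components do, and that $\re\{\overline{g_b}\} = \re\{g_b\}$ as distributions. A secondary point worth a sentence is checking that $T_\bc(f)$ genuinely has the claimed vanishing boundary data when $f\in L^1(D,\bc)$ only (so that $T_*(f^+)$ and $T(f^-)$ lie in the right class for the boundary trace to make sense); this is already asserted in the background section via Theorem \ref{Begfirstorder}, so I would simply cite it. Everything else is a direct transcription of the proofs of Theorems \ref{bcholoSchwarzthm} and \ref{bcnonhomogfirstorderthm}.
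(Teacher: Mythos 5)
Your proposal is correct and follows essentially the same route as the paper: solve the two complex Schwarz problems via Theorem \ref{nonhomogsbvp}, conjugate the $b_1$-component, assemble with $p^{\pm}$, and add $T_\bc(f)$; your uniqueness argument via the idempotent splitting of $\dbar w = f$ is in fact spelled out more explicitly than in the paper's proof. (One trivial slip: in the uniqueness step $\im\{\overline{w^+}(0)\} = -c_1$ rather than $c_1$, which does not affect the argument.)
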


\begin{proof}
By Theorem \ref{nonhomogsbvp}, 
\[
    g(re^{i\theta}) := \frac{1}{2\pi} \langle b_1, P_r(\theta - \cdot) + iQ_r(\theta - \cdot)\rangle -ic_1
\]
uniquely solves
\[
    \begin{cases}
        \frac{\p g}{\p \z} = 0\\
        \re\{g_b\} = b_1\\
        \im\{g(0)\} = -c_1
    \end{cases},
\]
and 
\[
    f(re^{i\theta}) := \frac{1}{2\pi} \langle b_2, P_r(\theta - \cdot) + iQ_r(\theta - \cdot)\rangle + ic_2
\]
uniquely solves
\[
    \begin{cases}
        \frac{\p f}{\p \z} = 0\\
        \re\{f_b\} = b_2\\
        \im\{f(0)\} = c_2
    \end{cases}.
\]
Hence, 
\[
\overline{g(re^{i\theta})} = \overline{\left[\frac{1}{2\pi} \langle b_1, P_r(\theta - \cdot) + iQ_r(\theta - \cdot)\rangle \right]} + ic_1
\]
uniquely solves
\[
\begin{cases}
        \frac{\p \overline{g}}{\p z} = 0\\
        \re\{\overline{g_b}\} = b_1\\
        \im\{\overline{g(0)}\} = c_1
    \end{cases},
\]
and
\[
    \tilde{w} := p^+ \overline{g} + p^- f
\]
uniquely solves
\[
\begin{cases}
    \dbar \tilde{w} = 0\\
    \re\{\tilde{w}^+_b\} = b_1 \\
    \re\{\tilde{w}^-_b\} = b_2\\
    \im\{ \tilde{w}^+(0)\}  = c_1 \\
    \im\{ \tilde{w}^-(0) \} = c_2
\end{cases}.
\]
Therefore, 
\[
    w := \tilde{w} + T_\bc(f)
\]
uniquely solves 
\[
\begin{cases}
    \dbar w = f\\
    \re\{w^+_b\} = b_1 \\
    \re\{w^-_b\} = b_2\\
    \im\{ w^+(0)\}  = c_1 \\
    \im\{ w^-(0) \} = c_2
\end{cases}.
\]

\end{proof}

\begin{comm}\label{firstorderhardycomment}
    If $f \equiv 0$ and $b_1, b_2$ in the statement of the theorem above are the real parts of distributional boundary values of functions in a complex holomorphic Hardy space
    \[
        H^p(D) := \{ h \in Hol(D) : \sup_{0 < r < 1 } \int_0^{2\pi} |h(re^{i\theta})|^p\,d\theta < \infty\},
    \]
    i.e., $b_1, b_2 \in \re\{(H^{p}(D))_b\} := \{ \re\{h\}_b : h \in H^p(D)\}$, $ 0 < p < \infty$, (see \cite{Duren, Koosis, Rep} for background on these spaces), then $\overline{w^+}, w^- \in H^p(D)$, by Theorem 6.2 in \cite{GHJH2}. By Theorem 4.1 in \cite{BCAtomic}, this implies that $w$ is an element of the $\bc$-holomorphic Hardy space 
    \[
        H^p(D,\bc) := \{h: D \to \bc :  \dbar h = 0 \text{ and }\sup_{0 < r < 1 } \int_0^{2\pi} ||h(re^{i\theta})||_\bc^p\,d\theta < \infty\}
    \]
    studied in \cite{BCAtomic}. If $f \not\equiv 0$, then by the same argument, for $0 < p < \infty$, $f \in L^q(D,\bc)$, $q>2$ or $1< q \leq 2$ and $p < \frac{q}{2-q}$, and $b_1, b_2 \in \re\{(H^p(D))_b\}$, $\overline{w^+}$ is in the generalized complex Hardy class
    \[
    H^p_{\overline{f^+}}(D) := \{g: D \to \mathbb{C} : \frac{\p g}{\p\z} = f^+ \text{ and } \sup_{0 < r < 1 } \int_0^{2\pi} |g(re^{i\theta})|^p\,d\theta < \infty\},
    \]
     see \cite{WB, BCAtomic} for more background about these classes of functions, and $w^-$ is an element of the similarly defined $H^p_{f^-}(D)$.  This implies that $w$ is in the generalized Hardy class of $\bc$-valued functions 
     \[
        H^p_f(D,\bc) := \{h: D \to \bc :  \dbar h = f \text{ and }\sup_{0 < r < 1 } \int_0^{2\pi} ||h(re^{i\theta})||_\bc^p\,d\theta < \infty\},
    \]
    which are also studied in \cite{BCAtomic}. See also Remark 2.14 in \cite{WB2}.
\end{comm}

Now, we extend the first-order result from above to the natural higher-order generalization.

\begin{theorem}
For $n$ a positive integer, $b^\pm_0 \in \mathcal{D}'(\p D)$, $b^\pm_k\in \re\{(H^{p^\pm_k}(D))_b\}$, for $1\leq k \leq n-1$ where $p^\pm_k > \frac{1}{2}$, $c^\pm_k \in \R$, for $0 \leq k \leq n-1$, and $f \in L^1(D, \bc)$, the bicomplex-Schwarz boundary value problem 
\[
\begin{cases}
    \dbar^n w = f\\
    \re\{(\dbar ^k w)_b^+\} = b^+_k \\
    \re\{(\dbar^k w)_b^-\} = b^-_k\\
    \im\{ (\dbar^k w)^+(0)\}  = c^+_k \\
    \im\{ (\dbar^k w)^-(0) \} = c^-_k
\end{cases}
\]
is uniquely solved by 
\[
w(z) = p^+ w^+ + p^- w^-,
\]
where 
\begin{align*}
    \overline{w^+(z)} &:=  -i\sum_{k = 0}^{n-1}\frac{c^+_k}{k!}(z+\z)^k   + \sum_{k=0}^{n-1}\frac{(-1)^k}{2\pi   k!} \langle b^+_k, (P_r(\theta - \cdot)+iQ_r(\theta - \cdot))(e^{i(\cdot)} - re^{i\theta}+\overline{e^{i(\cdot)} - re^{i\theta}})^k\rangle  \\
&\quad\quad - \frac{1}{2\pi}\iint_{|\zeta|<1} \left( \frac{\overline{f^+(\zeta)}}{\zeta}\,\frac{\zeta + z}{\zeta - z} + \frac{f^+(\zeta)}{\overline{\zeta}}\,\frac{1+z\overline{\zeta}}{1-z\overline{\zeta}}\right)(\zeta - z+\overline{\zeta - z})^{n-1}\,d\xi\,d\eta 
\end{align*}
and
\begin{align*}
    w^-(z) &:= i\sum_{k = 0}^{n-1}\frac{c^-_k}{k!}(z+\z)^k  + \sum_{k=0}^{n-1}\frac{(-1)^k}{2\pi   k!} \langle b^-_k, (P_r(\theta - \cdot)+iQ_r(\theta - \cdot))(e^{i(\cdot)} - re^{i\theta}+\overline{e^{i(\cdot)} - re^{i\theta}})^k\rangle \\
&\quad - \frac{1}{2\pi}\iint_{|\zeta|<1} \left( \frac{f^-(\zeta)}{\zeta}\,\frac{\zeta + z}{\zeta - z} + \frac{\overline{f^-(\zeta)}}{\overline{\zeta}}\,\frac{1+z\overline{\zeta}}{1-z\overline{\zeta}}\right)(\zeta - z+\overline{\zeta - z})^{n-1}\,d\xi\,d\eta
\end{align*}

\end{theorem}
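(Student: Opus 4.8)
The plan is to reduce the bicomplex higher-order Schwarz problem to two decoupled complex higher-order Schwarz problems via the idempotent representation, exactly mirroring the first-order argument in Theorem \ref{nonhomogbcdistrbvfirstorder}. First I would decompose $w = p^+ w^+ + p^- w^-$ and $f = p^+ f^+ + p^- f^-$, and use Definition \ref{bcdbardeff} to note that $\dbar^n w = p^+ (\p^n w^+/\p z^n) + p^- (\p^n w^- / \p \z^n)$, so that the single bicomplex equation $\dbar^n w = f$ is equivalent to the pair $\p^n w^- / \p \z^n = f^-$ and $\p^n w^+ / \p z^n = f^+$. Likewise, by Proposition \ref{bcidemrep} the boundary and origin conditions split: $\re\{(\dbar^k w)^-_b\} = b_k^-$, $\im\{(\dbar^k w)^-(0)\} = c_k^-$ become the boundary/pointwise data for the $w^-$ problem, and similarly the $+$ conditions, noting $(\dbar^k w)^\pm$ involves $\p^k/\p\z^k$ on the $-$ side and $\p^k/\p z^k$ on the $+$ side by iterating Definition \ref{bcdbardeff}.

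Next I would handle the $w^-$ component directly: the problem
\[
\begin{cases}
\frac{\p^n w^-}{\p \z^n} = f^-,\\
\re\{(\frac{\p^k w^-}{\p\z^k})_b\} = b_k^-,\\
\im\{\frac{\p^k w^-}{\p\z^k}(0)\} = c_k^-,
\end{cases}
\qquad 0 \le k \le n-1,
\]
is exactly the setting of Theorem \ref{higherschwarz}, whose hypotheses ($f^- \in L^1(D,\mathbb{C})$, $b_0^- \in \mathcal{D}'(\p D)$, $b_k^- \in \re\{(H^{p_k^-}(D))_b\}$ with $p_k^- > 1/2$, $c_k^- \in \R$) are precisely what we assumed. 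So Theorem \ref{higherschwarz} gives that $w^-$ as written is the unique solution. For the $w^+$ component, the $\bc$-holomorphic structure forces the operator $\p^k/\p z^k$ rather than $\p^k/\p\z^k$; the trick, as in the first-order proofs, is to pass to the conjugate. Setting $g := \overline{w^+}$ and applying complex conjugation to the equations (using that $\overline{\p^k h/\p z^k} = \p^k \overline{h}/\p\z^k$ and that conjugation fixes real parts while negating imaginary parts), I would check that $g$ must solve the higher-order complex Schwarz problem with data $f^+ \mapsto \overline{f^+}$, $b_k^+ \mapsto b_k^+$ (real parts unchanged), and $c_k^+ \mapsto -c_k^+$. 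Applying Theorem \ref{higherschwarz} to this conjugated problem yields the stated formula for $\overline{w^+}$, and conjugating back gives $w^+$; the sign flip $-i\sum c_k^+/k! (z+\z)^k$ and the swapped roles of $f^+$ and $\overline{f^+}$ in the area integral are exactly the fingerprints of this conjugation step, matching the displayed formula.

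Finally I would assemble uniqueness: since the idempotent representation is unique (Proposition \ref{bcidemrep}) and $w^\pm$ determine $w$, and since each of the two complex problems has a unique solution by Theorem \ref{higherschwarz}, the bicomplex problem has the unique solution $w = p^+ w^+ + p^- w^-$; one also records via Proposition \ref{holoiffbcholo}-type reasoning that $\dbar^n w = f$ holds because the components solve their respective complex equations. The main obstacle I anticipate is purely bookkeeping rather than conceptual: carefully tracking how iterating $\dbar$ acts componentwise — that $(\dbar^k w)^+ = \p^k w^+/\p z^k$ and $(\dbar^k w)^- = \p^k w^-/\p\z^k$ — and then verifying that conjugation converts the $+$-component problem into exactly the hypothesis format of Theorem \ref{higherschwarz} (in particular that $\re\{(H^{p}(D))_b\}$ is conjugation-stable and that the Hardy-space integrability hypotheses needed for the iteration in Theorem \ref{higherschwarz} survive passage to $\overline{f^+}$ and to the conjugated kernel). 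Once those identifications are pinned down, the result is an immediate corollary of Theorem \ref{higherschwarz} applied twice.
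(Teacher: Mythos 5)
Your proposal is correct and follows exactly the route the paper takes: the paper's proof is a one-line appeal to Theorem \ref{higherschwarz} (applied to $w^-$ directly and to $\overline{w^+}$ after conjugation), which is precisely the argument you spell out, including the sign flip in the $c^+_k$ terms and the swap of $f^+$ with $\overline{f^+}$ coming from the conjugation step.
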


\begin{proof}
The formula and its uniqueness is a direct result of iterating Theorem \ref{nonhomogbcdistrbvfirstorder} or by appeal to Theorem \ref{higherschwarz} to construct the appropriate $w^+$ and $w^-$.

\end{proof}

\begin{comm}
    Similarly to the first order case (see Remark \ref{firstorderhardycomment}), if $f \in L^q(D,\bc)$, $q>2$ and $b_0^\pm \in \re\{(H^{p^\pm_0}(D))_b\}$, $0 < p^\pm_0 < \infty$, then $\overline{w^+}$ is an element of the generalized Hardy class 
    \[
        H^{n,p^+}_{\overline{f^+}}(D) := \{ g: D \to \mathbb{C}: \frac{\p^n g}{\p \z^n} = \overline{f^+} \text{ and } \sum_{k = 0}^{n-1} \sup_{0 < r < 1} \int_0^{2\pi}\left| \frac{\p^k g}{\p \z^k}(re^{i\theta})\right|^{p^+}\,d\theta < \infty\},
    \]
    where $p^+ := \min_{0 \leq k \leq n-1}\{p^+_k\}$, and $w^-$ is an element of the similarly defined $H^{n,p^-}_{f^-}(D)$, where $p^-:= \min_{0 \leq k \leq n-1}\{p^-_k\}$. See \cite{WB, BCAtomic} for more background on these generalized Hardy classes. This implies that $w$ is an element of the generalized bicomplex Hardy class 
    \[
        H^{n,p}_f(D, \bc) := \{ h: D \to\bc : \dbar^n w = f \text{ and } \sum_{k = 0}^{n-1} \sup_{0 < r < 1} \int_0^{2\pi} ||\dbar^k h(re^{i\theta})||_{\bc}^p \,d\theta < \infty\},
    \]
    where $p := \min\{p^+, p^-\}$. These classes of functions were previously studied in \cite{BCAtomic}. Also see Remark 2.14 in \cite{WB2}. 

\end{comm}

g\section{Bicomplex Dirichlet Boundary Value Problem}\label{bcdirichlet}

In this final section, we consider a bicomplex analogue of the Dirichlet boundary value problem. Observe that
\begin{align*}
    4\partial \dbar &:= 4\left(   p^+\frac{\p}{\p \z} + p^- \frac{\p}{\p z}\right)  \left( p^+\frac{\p}{\p z} + p^- \frac{\p}{\p \z}\right)\\
    &= 4\left(p^+\frac{\p}{\p\z}\frac{\p}{\p z} + p^- \frac{\p}{\p z}\frac{\p}{\p\z} \right)\\
    &= p^+ \Delta + p^- \Delta \\
    &= \Delta.
\end{align*}
So, the Dirichlet problems we consider are, as in the classical case, with respect to harmonic functions. The novelty as presented is that the functions we seek are $\bc$-valued. 

\begin{theorem}
    The bicomplex Dirichlet problem
    \[
        \begin{cases}
            \p \dbar f = 0\\
            f|_{\p D} = 0
        \end{cases}
    \]
has only the trivial solution.
\end{theorem}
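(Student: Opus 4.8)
The plan is to reduce the bicomplex problem to two scalar complex Dirichlet problems by using the idempotent representation, and then invoke the classical uniqueness for the Dirichlet problem on the disk. First I would write $f = p^+ f^+ + p^- f^-$ with $f^\pm : D \to \mathbb{C}$, as guaranteed by Proposition \ref{bcidemrep} applied pointwise. Since $4\partial\dbar = \Delta$ acts diagonally in the idempotent components — indeed $\partial\dbar f = p^+ \tfrac14\Delta f^+ + p^- \tfrac14\Delta f^-$ by the computation already displayed before the theorem — the equation $\partial\dbar f = 0$ is equivalent to $\Delta f^+ = 0$ and $\Delta f^- = 0$ on $D$. Likewise, because the idempotent representation is unique, the boundary condition $f|_{\p D} = 0$ forces $f^+|_{\p D} = 0$ and $f^-|_{\p D} = 0$.

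The next step is to observe that $f^\pm$ are then complex-valued harmonic functions on $D$ with vanishing boundary values, so each of their real and imaginary parts is a real harmonic function on $D$ vanishing on $\p D$. By the classical uniqueness theorem for the Dirichlet problem on the disk (the Poisson integral of the zero boundary function is identically zero, together with the maximum principle for harmonic functions), we conclude $f^+ \equiv 0$ and $f^- \equiv 0$. Therefore $f = p^+ \cdot 0 + p^- \cdot 0 \equiv 0$, which is the trivial solution. I should be mildly careful about what regularity is implicitly assumed so that "$f|_{\p D} = 0$" makes sense — presumably $f$ is continuous up to $\overline{D}$, matching the classical framing of the Dirichlet problem — and under that hypothesis the continuity of $f^\pm$ up to the boundary follows since $f^\pm$ are fixed $\mathbb{C}$-linear combinations of the components of $f$.

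The only real subtlety, and the step I would flag as the main (though minor) obstacle, is justifying that the boundary condition genuinely separates into the two idempotent components: this is exactly where uniqueness of the idempotent representation (Proposition \ref{bcidemrep}) is used, and one should note that the map $z \mapsto (z^+, z^-)$ is a continuous (in fact $\mathbb{R}$-linear, even $\mathbb{C}$-linear) isomorphism $\bc \to \mathbb{C}\oplus\mathbb{C}$, so passing to components commutes with taking boundary limits and with applying the Laplacian. Once that bookkeeping is in place, the proof is essentially a one-line appeal to the scalar result. If a stronger boundary notion (distributional boundary value) were intended, the same reduction still works, now invoking the distributional uniqueness of Straube \cite{Straube} componentwise instead of the classical maximum principle.
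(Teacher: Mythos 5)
Your proposal is correct and follows essentially the same route as the paper: decompose via the idempotent representation, observe that $f^\pm$ are harmonic on $D$ with vanishing boundary values, and conclude $f^\pm \equiv 0$ by classical uniqueness for the Dirichlet problem. The extra care you take with the continuity of the component maps and the regularity needed for the boundary condition is reasonable but not something the paper spells out.
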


\begin{proof}
By Proposition \ref{bcidemrep}, $f: D \to \bc$ is harmonic if and only if $f^+$ and $f^-$ are harmonic, and $f|_{\p D} = 0$ if and only if $f^+|_{\p D} = 0 = f^-|_{\p D}$. So, $f^\pm$ are harmonic functions in the disk that are zero on the circle. Therefore, $f^\pm \equiv 0$, and consequently, $f \equiv 0$.
\end{proof}

\begin{theorem}
    For $g \in L^1(\p D, \bc)$, the bicomplex Dirichlet problem
    \[
        \begin{cases}
            \p \dbar f = 0\\
            f|_{\p D} = g
        \end{cases}
    \]
is uniquely solved by 
\[
    f = p^+ \frac{1}{2\pi} \int_0^{2\pi} g^+(e^{it})P_r(\theta - t) \,dt  + p^- \frac{1}{2\pi} \int_0^{2\pi} g^-(e^{it})P_r(\theta - t) \,dt.
\]
\end{theorem}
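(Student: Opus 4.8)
The plan is to reduce the bicomplex Dirichlet problem to a pair of classical complex Dirichlet problems via the idempotent decomposition, exactly as in the preceding two theorems. Write $g = p^+ g^+ + p^- g^-$ with $g^+, g^- : \p D \to \mathbb{C}$ (Proposition \ref{bcidemrep}). The first routine point is that $g \in L^1(\p D, \bc)$ if and only if $g^+, g^- \in L^1(\p D, \mathbb{C})$: by Definition \ref{bcnormdeff} the quantity $\|g(e^{it})\|_{\bc}$ is, pointwise in $t$, comparable to $|g^+(e^{it})| + |g^-(e^{it})|$, so the $\bc$-valued $L^1$ norm is comparable to $\|g^+\|_{L^1} + \|g^-\|_{L^1}$, and each of $g^\pm$ is integrable on $\p D$.

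Next, as computed in the excerpt, $4\partial\dbar = \Delta$; combined with Proposition \ref{bcidemrep} and the uniqueness of the idempotent representation, this shows that $f = p^+ f^+ + p^- f^-$ satisfies $\partial\dbar f = 0$ in $D$ if and only if $f^+$ and $f^-$ are harmonic in $D$, and that $f|_{\p D} = g$ if and only if $f^+|_{\p D} = g^+$ and $f^-|_{\p D} = g^-$ (boundary values taken in the appropriate sense, e.g.\ nontangential limits a.e.). Thus the bicomplex problem splits into two independent classical Dirichlet problems with $L^1$ boundary data. For the existence half, I would invoke the classical fact that for $h \in L^1(\p D, \mathbb{C})$ the Poisson integral $\frac{1}{2\pi}\int_0^{2\pi} h(e^{it}) P_r(\theta - t)\,dt$ is harmonic in $D$ and recovers $h$ as its boundary value; applying this to $h = g^+$ and $h = g^-$ and reassembling with the idempotents $p^\pm$ yields precisely the stated formula and a genuine solution of the problem.

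For uniqueness, if $f_1$ and $f_2$ both solve the problem then $f_1 - f_2$ solves the homogeneous problem $\partial\dbar(f_1 - f_2) = 0$, $(f_1 - f_2)|_{\p D} = 0$, hence $f_1 = f_2$ by the preceding theorem. I expect the only delicate point to be the sense in which the boundary condition is imposed and the corresponding uniqueness class: the classical $L^1$ Dirichlet problem is unique only within a restricted class of harmonic functions (for instance Poisson integrals, equivalently harmonic functions with uniformly bounded integral means), since the Poisson kernel itself is a nonzero harmonic function whose nontangential boundary values vanish a.e.\ on $\p D$. So the theorem should be read with the implicit hypothesis that each component $f^\pm$ lies in that class, and the uniqueness then follows componentwise from the classical $h^1$ uniqueness/Fatou theorem (or, equivalently, from the preceding bicomplex uniqueness theorem read in the same class). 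Everything else is routine bookkeeping with the idempotent projectors $p^\pm$.
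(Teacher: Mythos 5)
Your proof follows essentially the same route as the paper's: decompose via the idempotents $p^\pm$, reduce to two independent classical complex Dirichlet problems with $L^1$ boundary data, solve each by the Poisson integral, and reassemble componentwise. Your caveat about the sense in which the boundary condition is imposed and the restricted uniqueness class for $L^1$ data is well taken---the paper's proof simply asserts that the complex Poisson-integral solutions ``are unique'' without addressing this---but it refines rather than changes the argument.
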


\begin{proof}
By Proposition \ref{bcidemrep}, $f: D \to \bc$ is harmonic if and only if $f^+$ and $f^-$ are harmonic, and if $f|_{\p D} =  p^+ f^+|_{\p D} + p^- f^-|_{\p D} = g = p^+ g^+ + p^- g^-$, then $f^+|_{\p D} = g^+ \in L^1(\p D)$ and $f^-|_{\p D} = g^- \in L^1(\p D)$. Thus, 
\[
    f^+(re^{i\theta}) = \frac{1}{2\pi} \int_0^{2\pi} g^+(e^{i\theta}) P_{r}(\theta -t)\,dt,
\]
\[
    f^-(re^{i\theta}) = \frac{1}{2\pi} \int_0^{2\pi} g^-(e^{i\theta}) P_{r}(\theta -t)\,dt,
\]
and these solutions are unique. Therefore, 
\[
f(re^{i\theta}) = p^+ \frac{1}{2\pi} \int_0^{2\pi} g^+(e^{it})P_r(\theta - t) \,dt  + p^- \frac{1}{2\pi} \int_0^{2\pi} g^-(e^{it})P_r(\theta - t) \,dt.
\]
\end{proof}

\begin{corr}
    The bicomplex Dirichlet problem
    \[
        \begin{cases}
            \p \dbar f = 0\\
            f_b = g
        \end{cases}
    \]
for $g \in \mathcal{D'}(\p D, \bc) := \{ h \in \mathcal{D}'(\p D): \langle h, \varphi \rangle \in \bc, \text{ for }\varphi \in C^\infty(\p D)\}$, is uniquely solved by 
\[
    f = p^+ \frac{1}{2\pi} \langle g^+, P_r(\theta - \cdot) \rangle + p^- \frac{1}{2\pi} \langle g^-, P_r(\theta - \cdot)  \rangle.
\]
\end{corr}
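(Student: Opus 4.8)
The plan is to reduce the bicomplex statement to the scalar distributional Dirichlet problem via the idempotent decomposition, exactly as in the two preceding results. First I would observe that by Proposition~\ref{bcidemrep} any $f : D \to \bc$ decomposes uniquely as $f = p^+ f^+ + p^- f^-$ with $f^\pm : D \to \C$, and that $\p\dbar f = 0$ if and only if $\Delta f^+ = 0$ and $\Delta f^- = 0$, using the identity $4\p\dbar = \Delta$ established at the start of Section~\ref{bcdirichlet} together with the uniqueness of the idempotent representation. Similarly, I would unpack the hypothesis $g \in \mathcal{D}'(\p D, \bc)$: since every such $g$ pairs to a bicomplex scalar against test functions, writing $\langle g, \vp\rangle = p^+ \langle g^+, \vp\rangle + p^- \langle g^-, \vp\rangle$ defines $g^+, g^- \in \mathcal{D}'(\p D)$ as genuine (complex-valued) distributions.

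Next I would translate the distributional boundary condition $f_b = g$ into its scalar components. The boundary value in the sense of distributions of $f$ is characterized by the existence of $\lim_{r \nearrow 1} \int_0^{2\pi} f(re^{i\theta})\gamma(\theta)\,d\theta$ for all $\gamma \in C^\infty(\p D)$; decomposing the integrand through the idempotents and invoking uniqueness of the representation once more shows that $f_b = g$ is equivalent to $f^+_b = g^+$ and $f^-_b = g^-$ in $\mathcal{D}'(\p D)$. At this point each component is exactly the scalar distributional Dirichlet problem on the disk, whose unique solution is the Poisson integral of the boundary distribution, namely $f^\pm(re^{i\theta}) = \frac{1}{2\pi}\langle g^\pm, P_r(\theta - \cdot)\rangle$; this is the content of Straube's result cited in the introduction (and is the $g \equiv$ distribution analogue of the preceding theorem). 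Reassembling, $f = p^+ \frac{1}{2\pi}\langle g^+, P_r(\theta - \cdot)\rangle + p^- \frac{1}{2\pi}\langle g^-, P_r(\theta - \cdot)\rangle$ solves the problem, and uniqueness is inherited componentwise since $f^\pm$ are uniquely determined and in turn determine $f$.

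The only genuinely non-routine point is justifying that the distributional boundary value of a $\bc$-valued function decomposes compatibly with the idempotents — i.e., that $f_b$ exists if and only if both $f^\pm_b$ exist and then $f_b = p^+ f^+_b + p^- f^-_b$. This follows because $p^+$ and $p^-$ are fixed (constant) bicomplex numbers, so they pass through the limit and the integral, and because $\{p^+, p^-\}$ is a basis over $\C$ for which convergence of a combination is equivalent to convergence of each coordinate; but it deserves an explicit sentence rather than being swept under the rug. Everything else is a direct invocation of Proposition~\ref{bcidemrep}, the identity $4\p\dbar = \Delta$, and the known scalar solution formula, so the proof will be short.

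\begin{proof}
By Proposition~\ref{bcidemrep}, write $f = p^+ f^+ + p^- f^-$ with $f^\pm : D \to \C$ and $g = p^+ g^+ + p^- g^-$ with $g^\pm \in \mathcal{D}'(\p D)$. Since $4\p\dbar = \Delta$ and the idempotent representation is unique, $\p\dbar f = 0$ if and only if $f^+$ and $f^-$ are harmonic. For any $\gamma \in C^\infty(\p D)$ we have $\int_0^{2\pi} f(re^{i\theta})\gamma(\theta)\,d\theta = p^+ \int_0^{2\pi} f^+(re^{i\theta})\gamma(\theta)\,d\theta + p^- \int_0^{2\pi} f^-(re^{i\theta})\gamma(\theta)\,d\theta$, and because $\{p^+, p^-\}$ is a $\C$-basis of $\bc$, the limit as $r \nearrow 1$ exists and equals $\langle g, \gamma\rangle = p^+\langle g^+,\gamma\rangle + p^-\langle g^-,\gamma\rangle$ if and only if $f^+_b = g^+$ and $f^-_b = g^-$ in $\mathcal{D}'(\p D)$. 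Thus the stated bicomplex Dirichlet problem is equivalent to the two scalar distributional Dirichlet problems for $f^+$ and $f^-$, each of which is uniquely solved by the Poisson integral of the boundary distribution: $f^\pm(re^{i\theta}) = \frac{1}{2\pi}\langle g^\pm, P_r(\theta - \cdot)\rangle$. Reassembling via the idempotents yields
\[
f = p^+ \frac{1}{2\pi}\langle g^+, P_r(\theta - \cdot)\rangle + p^- \frac{1}{2\pi}\langle g^-, P_r(\theta - \cdot)\rangle,
\]
and uniqueness follows since $f^\pm$ are uniquely determined and $f^\pm$ uniquely determine $f$.
\end{proof}
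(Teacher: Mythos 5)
Your proof is correct and follows essentially the same route as the paper: decompose $f$ and $g$ via the idempotents, invoke Straube's scalar result for each component, and reassemble, with uniqueness inherited from the uniqueness of the idempotent representation. The one point you elaborate beyond the paper --- that $f_b = g$ is equivalent to $f^\pm_b = g^\pm$ because $\{p^+, p^-\}$ is a $\C$-basis and convergence in $\bc$ is coordinatewise --- is a worthwhile explicit justification of a step the paper leaves implicit.
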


\begin{proof}
The proof of this result is the exact same as the last theorem with the Poisson integral replaced with the distributional pairing against the Poisson kernel. The distributional pairing against the prescribed distributional boundary value is shown to be the unique harmonic function with that distributional boundary value in \cite{Straube}. Therefore, the formula in the statement is the unique solution to the bicomplex Dirichlet problem by an appeal to Proposition \ref{bcidemrep}. 
\end{proof}


\printbibliography

\section*{Funding}

The authors declare that no funds, grants, or other support were received during the preparation of this manuscript.

\section*{Competing Interests}

The authors have no relevant financial or non-financial interests to disclose.

\section*{Author Contributions}

All authors contributed equally in all aspects of the preparation of this article. All authors read and approved the final manuscript.

\section*{Data Availability}

No data was produced during the creation of this article.

\end{document}